\newtheorem{definition}{\noindent{\it Definition}}[section]
\newtheorem{theorem}{\noindent{\it Theorem}}[section]
\newtheorem{remark}[theorem]{\noindent{\it Remark}}
\newtheorem{corollary}[theorem]{\noindent{\it Corollary}}
\newenvironment{proof}{\noindent{\it Proof:}}{$\hfill$ $\Box$\\ }
\newtheorem{example}{\noindent{\it Example}}[section]
\begin{document}

\title{Lyapunov exponent for Lipschitz maps}

\author{Giuliano G. La Guardia and Pedro Jeferson Miranda
\thanks{Giuliano Gadioli La Guardia (corresponding author) is with Department of Mathematics and Statistics,
State University of Ponta Grossa (UEPG), 84030-900, Ponta Grossa,
PR, Brazil, e-mail:gguardia@uepg.br. Pedro Jeferson Miranda is with
Department of Physics, State University of Ponta Grossa (UEPG),
84030-900, Ponta Grossa, PR, Brazil. }}

\maketitle

\begin{abstract}
It is well-known that the Lyapunov exponent plays a fundamental role
in dynamical systems. In this note, we propose a definition of
Lyapunov exponent for Lipschitz maps, which are not necessarily
differentiable. Additionally, we show that the main results which
are valid to discrete standard dynamical systems are also true when
considering Lipschitz maps instead of considering differentiable
maps. Therefore, this novel approach expands the theory of dynamical
systems.
\end{abstract}

\section{Introduction}

Theory of dynamical systems is extensively investigated in the
literature
\cite{Lyapunov:1992,Ledrappier:1981,Young:1982,Grebogi:1990,Pattanayak:1999,Aniszewska:2008,Dabrowski:2012,Sadri:2014,Hu:2017}.
Lyapunov, in his fabulous work \cite{Lyapunov:1992}, made several
important contributions in the investigation of the stability of
motion. In fact, the Lyapunov exponent strongly characterizes the
behavior of the system. In fact, all the papers mentioned above have
dealt with investigations and computations of the Lyapunov exponent
of the corresponding systems in order to characterize them.

The main contributions of this note are to propose a definition of
Lyapunov exponent for Lipschitz maps as well as to show that the
results which are valid to discrete standard dynamical systems also
hold when considering Lipschitz maps instead of considering
differentiable maps. Moreover, since a Lipschitz map is not
necessarily differentiable (recall that a Lipschitz map $f:{\mathbb
R}\longrightarrow{\mathbb R}$ satisfies that following condition:
for all $x , y \in {\mathbb R}$ with $x\neq y$, one has $\frac{|
f(x) - f(y) |}{| x - y |} \leq c $, for some $c \in {\mathbb R}, c >
0$; the existence of the limit is not guaranteed), this novel
approach expands the theory of discrete dynamical systems.

Another advantage of considering Lipschitz maps instead of
differentiable ones is that it is not necessary to compute the
derivative of the map in order to find fixed points which are sinks
or sources. In fact, only by the feature of the map (Lipschitz or
reverse Lipschitz) one can find which fixed points are sinks or
sources. Thus, our method is easier to be applied when compared to
the standard method (which only deals with differentiable maps).

Generalizations of Lyapunov exponent defined over continuous maps
were presented in the literature \cite{Kunze:2000,Barreira:2005}. In
\cite{Kunze:2000}, the author considered Lyapunov exponent for
non-smooth systems in order to apply to a pendulum with dry
friction. In \cite{Barreira:2005}, the authors defined Lyapunov
exponent for continuous map (see Subsection 3.1). However, such
definition is a little complex to be applied in practice. Otherwise,
our new approach is simple to be applied and it is complete in the
sense that it characterizes the concept of sources, sinks, Lyapunov
exponent, Lyapunov number and all the standard concepts and results
of discrete dynamical systems in a natural way. More precisely, the
results which hold for differentiable maps also hold in our new
context, that is, also hold for Lipschitz maps.

This note is organized as follows. Section~\ref{sec2} presents the
main concepts that will be utilized in this work. In
Section~\ref{sec3}, we present the contributions of the paper. More
precisely, we propose a definition of Lyapunov exponent for
Lipschitz maps which are not necessarily differentiable.
Additionally, we show that the main results which are valid to
discrete standard dynamical systems are also true when considering
Lipschitz maps instead of considering differentiable maps. Finally,
in Section~\ref{sec4}, a brief summary of this work is drawn.

\section{Preliminaries}\label{sec2}

In this section, we present the known results and concepts for the
development of this work. Throughout this note, we denote by
${\mathbb R}$ the field of real numbers and ${\mathbb R}^{m}$ is the
$m$-dimensional vector space over ${\mathbb R}$. We only consider
discrete dynamical systems.

As usual, a function whose domain is equal to its range is called
\emph{map}. Let $f:A\longrightarrow A$ be a map and $x \in A$. The
\emph{orbit} ${\mathcal O}_{x}$ of $x$ under $f$ is the set of
points ${\mathcal O}_{x} = \{x, f(x), f^{2}(x), \ldots, \}$, where
$f^{2}(x)=f(f(x))$ and so on. The point $x$ is said to be the
\emph{initial value} of the orbit. If there exists a point $p$ in
the domain of $f$ such that $f(p)=p$ then $p$ is called a
\emph{fixed point} of $f$.

Let $f:{\mathbb R}\longrightarrow{\mathbb R}$ be a map. Recall that
$f$ is said to be \emph{Lipschitz} if there exists a constant $c \in
{\mathbb R}$, $c > 0$ (called Lipschitz constant of $f$), such that
$\forall \ x, y \in {\mathbb R} \Longrightarrow | f(x) - f(y) | \leq
c | x - y |$, where $| \cdot |$ denotes the absolute value function
on ${\mathbb R}$. In other words, if $x\neq y$ then $\frac{| f(x) -
f(y) |}{| x - y |} \leq c $, i.e., the quotient is bounded. If
$\forall x, y \in {\mathbb R} \Longrightarrow | f(x) - f(y) | < c |
x - y |$, then $f$ is called \emph{strictly Lipschitz}.

Given $x \in {\mathbb R}$, the \emph{epsilon neighborhood}
$N_{\epsilon} (x)$ of $x$ is defined as $N_{\epsilon} (x)=\{y \in
{\mathbb R} : |x - y| < \epsilon \}$. Let $f:{\mathbb
R}\longrightarrow{\mathbb R}$ be a map and $x\in {\mathbb R}$. We
say that $f$ is \emph{locally Lipschitz at $x$} if there exists an
$\epsilon$-neighborhood $N_{\epsilon} (x)$ of $x$ such that $f$
restricted to $N_{\epsilon} (x)$ is Lipschitz.

Here, we introduce in the literature the new concept of
\emph{reverse Lipschitz map}. This concept will be utilized in order
to characterize sources (see Definition~\ref{defss} in the
following).

\begin{definition}\label{def2a}
Let $f:{\mathbb R}\longrightarrow{\mathbb R}$ be a map. We say that
$f$ is reverse Lipschitz (RL) if there exists a constant $c \in
{\mathbb R}$, $c > 0$ (called reverse Lipschitz constant of $f$)
such that, $\forall \ x, y \in {\mathbb R} \Longrightarrow | f(x) -
f(y) | \geq c | x - y |$. Similarly, $f$ is called locally reverse
Lipschitz at $x$ if there exists an $\epsilon$-neighborhood
$N_{\epsilon} (x)$ of $x$ such that $f$ restricted to $N_{\epsilon}
(x)$ is reverse Lipschitz.
\end{definition}

\begin{definition}\label{defss}
Let $f:{\mathbb R}\longrightarrow{\mathbb R}$ be a map and $p$ be a
fixed point of $f$. One says that $p$ is a \emph{sink} (or
\emph{attracting fixed point}) if there exists an $\epsilon > 0$
such that, for all $x \in N_{\epsilon}(p)$,
$\displaystyle{\lim_{k\rightarrow\infty}}f^{k}(x) = p$. On the other
hand, if all points sufficiently close to $p$ are repelled from $p$,
then $p$ is called a \emph{source}. In other words, $p$ is a source
if there exists an epsilon neighborhood $N_{\epsilon} (p)$ such
that, for every $\ x \in N_{\epsilon} (p)$, $x \neq p$, there exists
a positive integer $k$ with $|f^{k}(x) - p| \geq \epsilon$.
\end{definition} 

\section{The Results}\label{sec3}

In this section, we present the contributions of this paper. We
divide the section into four subsections: the stability of fixed
points in ${\mathbb R}$, stability of periodic orbits, stability of
maps on the Euclidean space ${\mathbb R}^{n}$, and a new definition
of Lyapunov exponent for Lipschitz maps.

\subsection{Stability of fixed points in ${\mathbb R}$}\label{sec3.1}

We begin this subsection by recalling a well-known result shown in
the literature.

\begin{theorem}\cite{Alligood:1996,Martelli:1999}\label{thm1}
Let $f:{\mathbb R}\longrightarrow{\mathbb R}$ be a
smooth map and let $p$ is a fixed point of $f$. If $|f^{'} (p)| <
1$, then $p$ is a sink. On the other hand, if $|f^{'} (p)| > 1$,
then $p$ is a source.
\end{theorem}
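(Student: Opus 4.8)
The plan is to prove this classical result using the Mean Value Theorem together with the observation that a smooth map with $|f'(p)| < 1$ is locally a contraction near $p$, and with $|f'(p)| > 1$ is locally expanding near $p$. First I would treat the sink case. Since $f'$ is continuous and $|f'(p)| < 1$, I can pick a constant $c$ with $|f'(p)| < c < 1$; by continuity of $f'$ there is an $\epsilon > 0$ such that $|f'(x)| \leq c$ for every $x \in N_{\epsilon}(p)$. Then for any $x \in N_{\epsilon}(p)$, the Mean Value Theorem gives $|f(x) - p| = |f(x) - f(p)| = |f'(\xi)|\, |x - p| \leq c\,|x - p| < \epsilon$, so $f(x) \in N_{\epsilon}(p)$ as well; hence the orbit stays in $N_{\epsilon}(p)$ and iterating yields $|f^{k}(x) - p| \leq c^{k}\,|x - p| \to 0$ as $k \to \infty$, which is precisely the definition of a sink.

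Next I would handle the source case. Since $|f'(p)| > 1$, choose $c$ with $1 < c < |f'(p)|$; by continuity of $f'$ pick $\epsilon > 0$ with $|f'(x)| \geq c$ on $N_{\epsilon}(p)$. For $x \in N_{\epsilon}(p)$ with $x \neq p$, as long as the iterates $x, f(x), \ldots, f^{k-1}(x)$ all remain in $N_{\epsilon}(p)$, repeated application of the Mean Value Theorem gives $|f^{k}(x) - p| \geq c^{k}\,|x - p|$. Since $c > 1$ and $|x - p| > 0$, the right-hand side grows without bound, so it cannot remain below $\epsilon$ forever; thus there is a positive integer $k$ with $|f^{k}(x) - p| \geq \epsilon$, matching the definition of a source.

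The main obstacle — really the only subtle point — is the bookkeeping in the source case: the estimate $|f^{k}(x) - p| \geq c^{k}|x-p|$ is only valid while all the intermediate iterates lie in the neighborhood where the derivative bound holds, so I must argue that the first time an iterate leaves $N_{\epsilon}(p)$ is exactly the $k$ we want, rather than blindly iterating. This is handled cleanly by taking $k$ to be the least index for which $f^{k}(x) \notin N_{\epsilon}(p)$ (such a $k$ exists, since otherwise all iterates stay in $N_{\epsilon}(p)$ and the geometric lower bound forces $|f^{k}(x)-p| \to \infty$, a contradiction); for that $k$ one has $|f^{k}(x) - p| \geq \epsilon$ directly. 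Everything else is a routine application of the Mean Value Theorem and continuity of $f'$.
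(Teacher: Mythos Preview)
Your proof is correct and is the standard textbook argument. Note, however, that the paper does not supply its own proof of this theorem: it is stated only as a known result, cited from \cite{Alligood:1996,Martelli:1999}, to set the stage for the paper's Lipschitz analogue (Theorem~\ref{main1}). So there is no in-paper proof to compare against.

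That said, your argument dovetails nicely with what the paper does prove. By invoking continuity of $f'$ and the Mean Value Theorem, you are in effect showing that a smooth map with $|f'(p)|<1$ is strictly locally Lipschitz at $p$ with constant $c<1$, and that one with $|f'(p)|>1$ is locally reverse Lipschitz at $p$ with constant $r>1$. From that point on, your iteration (the geometric contraction $|f^{k}(x)-p|\leq c^{k}|x-p|$ in the sink case, and the ``first exit time'' bookkeeping in the source case) is exactly the argument the paper gives for Theorem~\ref{main1}. In other words, your proof can be read as: smooth $\Rightarrow$ locally (reverse) Lipschitz via MVT, then apply the paper's Theorem~\ref{main1}.
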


From here to the end of the paper, we show that exchanging the
differentiability condition to the Lipschitz condition, the results
which hold for standard discrete dynamical systems are also true in
this new context. Since a Lipschitz map do not need to be
differentiable (remember that for $x\neq y$, it follows that
$\frac{| f(x) - f(y) |}{| x - y |} \leq c $; the existence of the
limit is not guaranteed), we can utilize this new approach for a
wider class of maps.

Theorem~\ref{main1}, shown in the following, is the first
contribution of this paper.

\begin{theorem}(Stability test for fixed points)\label{main1}
Let $f:{\mathbb R}\longrightarrow{\mathbb R}$ be a map and $p \in
{\mathbb R}$ a fixed point of $f$.
\begin{itemize}
\item [ 1-] If $f$ is strictly
locally Lipschitz map at $p$, with Lipschitz constant $c < 1$, then
$p$ is a sink.

\item [ 2-] If $f$ is locally reverse Lipschitz map at $p$, with constant $r > 1$, then
$p$ is a source.
\end{itemize}
\end{theorem}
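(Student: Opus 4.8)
The plan is to verify each of the two items directly from the definitions of sink and source, using the local (reverse) Lipschitz bound as the replacement for the derivative estimate in Theorem~\ref{thm1}.

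\medskip

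\textbf{Item 1 (sink).} Since $f$ is strictly locally Lipschitz at $p$ with constant $c<1$, there is an $\epsilon$-neighbourhood $N_{\epsilon}(p)$ on which $|f(x)-f(y)|<c\,|x-y|$ for all $x,y$. First I would observe that this $\epsilon$-neighbourhood is forward invariant: for $x\in N_{\epsilon}(p)$, using $f(p)=p$ we get $|f(x)-p|=|f(x)-f(p)|< c\,|x-p| < |x-p| < \epsilon$, so $f(x)\in N_{\epsilon}(p)$ as well. Iterating this, every point of the orbit stays in $N_{\epsilon}(p)$, and a straightforward induction gives $|f^{k}(x)-p|< c^{k}\,|x-p|$ for all $k\geq 1$. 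Since $0<c<1$, the right-hand side tends to $0$, hence $f^{k}(x)\to p$, which is exactly the definition of $p$ being a sink.

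\medskip

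\textbf{Item 2 (source).} Here $f$ is locally reverse Lipschitz at $p$ with constant $r>1$, so on some $N_{\delta}(p)$ we have $|f(x)-f(y)|\geq r\,|x-y|$. I would again use $f(p)=p$ to get $|f(x)-p|\geq r\,|x-p|$ for $x\in N_{\delta}(p)$. The subtlety, and the step I expect to be the main obstacle, is that the reverse Lipschitz bound only controls iterates as long as they remain inside $N_{\delta}(p)$ — once an orbit point leaves, the estimate need not apply. So the argument is: fix $x\in N_{\delta}(p)$ with $x\neq p$ and suppose for contradiction that $|f^{k}(x)-p|<\delta$ for every $k$ (i.e. the orbit never escapes $N_{\delta}(p)$). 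Then the bound applies at every step and induction yields $|f^{k}(x)-p|\geq r^{k}\,|x-p|$. Since $r>1$ and $|x-p|>0$, the right-hand side diverges, forcing $|f^{k}(x)-p|\to\infty$, contradicting the assumption that it stays below $\delta$. Hence for some $k$ we must have $|f^{k}(x)-p|\geq\delta$, which is precisely the definition of a source (with $\epsilon=\delta$).

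\medskip

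In both cases the whole content is the elementary induction $|f^{k}(x)-p|\lessgtr (\text{constant})^{k}|x-p|$ together with a care to stay within the neighbourhood where the hypothesis holds; in Item~1 the contraction makes invariance automatic, whereas in Item~2 invariance cannot be assumed and one instead argues by contradiction to produce the escape time $k$ demanded by the definition of source.
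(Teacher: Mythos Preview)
Your proposal is correct and follows essentially the same approach as the paper: for Item~1 you establish forward invariance of $N_{\epsilon}(p)$ and then the geometric contraction $|f^{k}(x)-p|<c^{k}|x-p|$ by induction, exactly as the paper does; for Item~2 your argument by contradiction is logically equivalent to the paper's direct step-by-step ``if $|f^{k}(x)-p|\geq\epsilon$, done; otherwise continue'' iteration, both hinging on the fact that $r^{k}|x-p|$ eventually exceeds $\epsilon$.
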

\begin{proof}
To show Item 1-), let $f$ be a strictly locally Lipschitz map at $p$
with Lipschitz constant $c < 1$. Then there exists an
$\epsilon$-neighborhood $N_{\epsilon}(p)$ of $p$ such that $|f(x) -
f(p)| < c|x - p|$ for all $x \in N_{\epsilon}(p)$. Therefore, if $x
\in N_{\epsilon}(p)$ then $|f(x) - f(p)|= |f(x) - p| < c |x - p| <
|x - p| < \epsilon$, i.e., $f(x) \in N_{\epsilon}(p)$. Applying the
same argument, it follows that $f^{2}(x), f^{3}(x), \ldots ,
f^{n}(x), \ldots$ also belong to $N_{\epsilon}(p)$. Next, we will
prove by induction that the inequality $|f^{k}(x) - p|< c^{k}|x-p|$,
$\forall \ x \in N_{\epsilon}(p)$, holds for all $k\geq 1$. It is
clear that for $k=1$ the inequality holds. Assume that the
inequality is true for $k$: $|f^{k}(x) - p|< c^{k}|x - p|$. We must
prove that $|f^{k+1}(x) - p|< c^{k+1}|x-p|$ is also true. As $f$ is
strictly locally Lipschitz in $N_{\epsilon}(p)$ and since $f^{k}(x)
\in N_{\epsilon}(p)$ we know that $|f^{k+1}(x) - p|< c|f^{k}(x) -
p|$. From induction hypothesis one has $|f^{k+1}(x) - p| < c^{k+1}|x
- p|$ and the result follows. Since $c < 1$ it follows that
$\displaystyle{\lim_{k\rightarrow\infty}}c^{k+1}|x - p|=0$. Thus
$\displaystyle{\lim_{k\rightarrow\infty}}f^{k}(x) = p$, i.e., $p$ is
a sink, as desired.

In order to prove Item 2-). From hypothesis, we know that there
exists an $\epsilon$-neighborhood $N_{\epsilon}(p)$ of $p$ such that
$|f(x) - p| \geq r|x - p|$ for all $x \in N_{\epsilon}(p)$. Fix $x
\in N_{\epsilon}(p)$, $x \neq p$. If $|f(x) - p| \geq \epsilon$,
then the result follows. Otherwise, $|f(x) - p| < \epsilon$, which
implies that $f(x) \in N_{\epsilon}(p)$. Applying again the fact
that $f$ is locally reverse Lipschitz in $N_{\epsilon}(p)$, one has
$|f^{2}(x) - p| \geq r|f(x) - p|\geq r^{2}|x - p|$. If $|f^{2}(x) -
p|\geq \epsilon$, the result holds. Otherwise, $|f^{2}(x) - p| <
\epsilon$, i.e., $f^{2}(x)\in N_{\epsilon}(p)$. Because $f^{2}(x)\in
N_{\epsilon}(p)$ and since $f$ is locally reverse Lipschitz in
$N_{\epsilon}(p)$, it follows that $|f^{3}(x) - p| \geq r|f^{2}(x) -
p|\geq r^{3}|x - p|$. If $|f^{3}(x) - p|\geq \epsilon$, then the
result is true. Otherwise, we proceed similarly as above. Applying
repeatedly this reasoning, it follows that there exists an integer
$k^{*} \geq 1$ such that $|f^{k^{*}}(x) - p| \geq r^{k^{*}}|x -
p|\geq \epsilon$, i.e., $|f^{k^{*}}(x) - p| \geq \epsilon$. More
precisely, because $r > 1$ and since $|x - p|$ is a fixed positive
real number, there exists a sufficiently large positive integer
$k^{*}$ such that $r^{k^{*}}|x - p|\geq \epsilon$, which implies
that $|f^{k^{*}}(x) - p| \geq \epsilon$ holds. Therefore, $p$ is a
source. The proof is complete.
\end{proof}

\begin{corollary}\label{main1.1}
Let $f:{\mathbb R}\longrightarrow{\mathbb R}$ be a map.
\begin{itemize}
\item [ 1-] If $f$ is strictly Lipschitz, with constant $c < 1$,
then there exists only one fixed point $p$ which is a sink.

\item [ 2-] If $f$ is reverse Lipschitz with constant $r > 1$,
then all fixed point $p$ is a source.
\end{itemize}
\end{corollary}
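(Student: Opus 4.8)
The plan is to deduce Corollary~\ref{main1.1} from Theorem~\ref{main1} by first establishing \emph{existence and uniqueness} of the fixed point in each case, and then invoking the local stability test. For Item 1-, note that a (strictly) Lipschitz map with constant $c<1$ is in particular a contraction on $\mathbb{R}$; since $\mathbb{R}$ is a complete metric space, the Banach fixed point theorem yields a unique fixed point $p$. Alternatively, and perhaps more in the self-contained spirit of this note, I would argue uniqueness directly: if $p$ and $q$ were both fixed points with $p\neq q$, then $|p-q|=|f(p)-f(q)|<c|p-q|<|p-q|$, a contradiction; existence can be shown by picking any $x_0$ and verifying that the orbit $(f^n(x_0))_n$ is Cauchy (the tail sums are dominated by a geometric series with ratio $c$), hence convergent, with limit necessarily a fixed point by continuity of $f$. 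Once $p$ is known to exist and be unique, observe that a strictly Lipschitz map is trivially strictly \emph{locally} Lipschitz at $p$ with the same constant $c<1$, so Part~1 of Theorem~\ref{main1} applies and $p$ is a sink.

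For Item 2-, the strategy is to show that every fixed point (there may be none, one, or several) is a source. The key observation is again purely local-to-global: if $f$ is reverse Lipschitz on all of $\mathbb{R}$ with constant $r>1$, then for any fixed point $p$, $f$ restricted to any $\epsilon$-neighbourhood $N_\epsilon(p)$ is reverse Lipschitz with the same constant $r>1$, i.e.\ $f$ is locally reverse Lipschitz at $p$ with constant $r>1$. Part~2 of Theorem~\ref{main1} then immediately gives that $p$ is a source. So the entire content of Item 2- reduces to the trivial remark ``global RL $\Rightarrow$ local RL at $p$'', combined with the already-proven theorem; no further work is needed, and the statement ``all fixed point $p$ is a source'' is vacuously or universally true regardless of how many fixed points there are.

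The only genuine obstacle is the \emph{existence} half of Item 1-: uniqueness is a one-line argument, and the sink conclusion is immediate from Theorem~\ref{main1}, but to claim ``there exists only one fixed point'' one must produce one. The clean route is to quote the Banach contraction principle; if the authors prefer not to cite it, I would include the short Cauchy-sequence argument sketched above, using that $|f^{n+1}(x_0)-f^n(x_0)|\le c^n|f(x_0)-x_0|$ and summing the geometric series to bound $|f^{m}(x_0)-f^{n}(x_0)|\le \frac{c^n}{1-c}|f(x_0)-x_0|\to 0$. Completeness of $\mathbb{R}$ then furnishes the limit $p$, and continuity of $f$ (which follows from the Lipschitz condition) gives $f(p)=\lim f(f^n(x_0))=\lim f^{n+1}(x_0)=p$. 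I would therefore structure the write-up as: (i) prove existence of a fixed point for Item 1- via completeness; (ii) prove uniqueness via the strict inequality; (iii) apply Theorem~\ref{main1}, Part~1, to conclude it is a sink; (iv) for Item 2-, remark that global reverse Lipschitz implies local reverse Lipschitz at any fixed point, then apply Theorem~\ref{main1}, Part~2.
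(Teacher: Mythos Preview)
Your proposal is correct and matches the paper's approach. For Item~1 the paper simply cites the Banach contraction theorem on $\mathbb{R}$, which is exactly what you invoke (your Cauchy-sequence sketch being a self-contained proof of that theorem, and the sink conclusion already part of its statement). For Item~2 the paper re-runs the iterative expansion argument of Theorem~\ref{main1} verbatim in the global setting, whereas you make the cleaner move of observing ``global RL $\Rightarrow$ local RL at $p$'' and then citing Theorem~\ref{main1} directly; the mathematical content is identical, your packaging is just more economical.
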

\begin{proof}
Item 1-) is the well-known Banach contraction theorem on the real
line (see \cite[Thm. 5.2.1]{Martelli:1999}).

To show Item 2-), it suffices to utilize an analogous proof of
Theorem~\ref{main1} in the following way. From hypothesis, we know
that $|f(x) - p| \geq r|x - p|$ for all $x \in {\mathbb R}$. We fix
$x \in {\mathbb R}$, $x \neq p$. If $|f(x) - p| \geq \epsilon$, then
the result follows. Otherwise, applying the fact that $f$ is reverse
Lipschitz at $p$, one has $|f^{2}(x) - p| \geq r|f(x) - p|\geq
r^{2}|x - p|$. If $|f^{2}(x) - p|\geq \epsilon$, the result holds.
Otherwise, $|f^{2}(x) - p| < \epsilon$, Since $f$ is is reverse
Lipschitz at $p$, it follows that $|f^{3}(x) - p| \geq r|f^{2}(x) -
p|\geq r^{3}|x - p|$. If $|f^{3}(x) - p|\geq \epsilon$, then one has
the result. Otherwise, applying repeatedly this reasoning, there
will be an integer $k_0 \geq 1$ such that $|f^{k_0}(x) - p| \geq
\epsilon$.
\end{proof}

\subsection{Stability of periodic points in ${\mathbb R}$}\label{sec3.2}

In this subsection, we deal with the stability of periodic points.

We first recall some known results concerning such topic. For more
details, we refer the reader to \cite{Alligood:1996,Martelli:1999}.
Let $f:{\mathbb R}\longrightarrow{\mathbb R}$ be a map and $p \in
{\mathbb R}$. Recall that $p$ is said to be a \emph{periodic point}
of period $k$ (or $k$-periodic point) if $f^{k}(p)=p$ and if $k$ is
the smallest such positive integer. The orbit of $p$ (which consists
of $k$ points) is called a \emph{periodic orbit} of period $k$ (or
$k$-periodic orbit). We will denote the $k$-periodic orbit of $p$ by
${\mathcal O}_{p}^{k}$.

If $f:{\mathbb R}\longrightarrow{\mathbb R}$ is a map and $p$ is a
$k$-periodic point, then the orbit ${\mathcal O}_{p}^{k}$ of $p$ is
called a \emph{periodic sink} if $p$ is a sink for the map $f^{k}$.
Analogously, ${\mathcal O}_{p}^{k}$ is a \emph{periodic source} if
$p$ is a source for $f^{k}$.

Let us recall the stability criteria for periodic orbits.

\begin{theorem}\cite{Alligood:1996,Martelli:1999}\label{thm2}
Let $f:{\mathbb R}\longrightarrow{\mathbb R}$ be a map. If
$|f^{'}(p_1 )\cdots f^{'}(p_k )| < 1$ then the $k$-periodic orbit
${\mathcal O}_{p}^{k}= \{p_1 , \ldots, p_k \}$ is a sink; if
$|f^{'}(p_1 )\cdots f^{'}(p_k )| > 1$ then ${\mathcal O}_{p}^{k}$ is
a source.
\end{theorem}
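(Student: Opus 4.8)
The plan is to reduce Theorem~\ref{thm2} to the fixed-point criterion of Theorem~\ref{thm1} applied to the iterate $g := f^{k}$. First I would observe that $p_{1}$ is a fixed point of $g$: writing $p_{i+1} = f^{i}(p_{1})$ for $0 \le i \le k-1$ with indices read cyclically (so $p_{k+1} = p_{1}$), the equality $f^{k}(p_{1}) = p_{1}$ says exactly that $p_{1}$ is fixed by $g$, and in fact every point of $\mathcal{O}_{p}^{k} = \{p_{1}, \ldots, p_{k}\}$ is such a fixed point. Since $f$ is smooth (at least near the points of the orbit, as is implicit in writing $f^{'}(p_{i})$), the map $g$ is smooth near $p_{1}$ as a composition of smooth maps, so Theorem~\ref{thm1} is applicable to $g$ at $p_{1}$.

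The next step is the derivative computation. By the chain rule,
\[
g^{'}(p_{1}) = (f^{k})^{'}(p_{1}) = \prod_{i=0}^{k-1} f^{'}\bigl(f^{i}(p_{1})\bigr) = f^{'}(p_{1}) f^{'}(p_{2}) \cdots f^{'}(p_{k}),
\]
so that $|g^{'}(p_{1})| = |f^{'}(p_{1}) \cdots f^{'}(p_{k})|$. Hence the hypothesis $|f^{'}(p_{1}) \cdots f^{'}(p_{k})| < 1$ is precisely $|g^{'}(p_{1})| < 1$, and Theorem~\ref{thm1} yields that $p_{1}$ is a sink for $g = f^{k}$; by the definition of a periodic sink this is the statement that $\mathcal{O}_{p}^{k}$ is a sink. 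The source case is handled identically: $|f^{'}(p_{1}) \cdots f^{'}(p_{k})| > 1$ is the same as $|g^{'}(p_{1})| > 1$, so Theorem~\ref{thm1} makes $p_{1}$ a source for $f^{k}$, i.e.\ $\mathcal{O}_{p}^{k}$ is a periodic source.

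Two small points would need care. First, in the chain-rule identity the factors $f^{'}(f^{i}(p_{1}))$, $i = 0, \ldots, k-1$, run over all $k$ points of the orbit, so the displayed product really is $f^{'}(p_{1}) \cdots f^{'}(p_{k})$; the order in which the factors are written is irrelevant since multiplication in $\mathbb{R}$ is commutative. Second, one should note that the choice of base point $p_{1}$ is immaterial: for any $p_{j}$ the same computation gives $(f^{k})^{'}(p_{j}) = \prod_{i=1}^{k} f^{'}(p_{i})$, a cyclic rearrangement of the same number, so the conclusion is a genuine property of the orbit rather than of a distinguished representative. I do not anticipate a real obstacle; the only place requiring any bookkeeping is keeping the cyclic indexing consistent through the chain-rule step.
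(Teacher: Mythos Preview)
Your argument is correct and is exactly the standard reduction: apply the chain rule to $g=f^{k}$ at the fixed point $p_{1}$ to obtain $|g'(p_{1})|=|f'(p_{1})\cdots f'(p_{k})|$, then invoke Theorem~\ref{thm1}. Note, however, that the paper does not give its own proof of Theorem~\ref{thm2}; it is stated there as a known result with citations to \cite{Alligood:1996,Martelli:1999}, and the proofs in those references proceed precisely along the lines you outline.
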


The following result is new version of Theorem~\ref{thm2} for
(reverse) Lipschitz maps.

\begin{theorem}(Stability test for periodic orbits)\label{main2}
Let $g=f^{k}:{\mathbb R}\longrightarrow{\mathbb R}$ be a map and $p
\in {\mathbb R}$ a fixed point of $g$.
\begin{itemize}
\item [ 1-] If $g$ is strictly
locally Lipschitz map at $p$, with Lipschitz constant $c < 1$, then
${\mathcal O}_{p}^{k}$ is a periodic sink.

\item [ 2-] If $g$ is locally reverse Lipschitz map at $p$, with constant
$r > 1$, then ${\mathcal O}_{p}^{k}$ is a periodic source.
\end{itemize}
\end{theorem}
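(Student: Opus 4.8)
The plan is to reduce Theorem~\ref{main2} directly to Theorem~\ref{main1} by exploiting the fact that the hypotheses are phrased entirely in terms of the single map $g = f^{k}$, together with the definitions of periodic sink and periodic source recalled just before the statement. First I would observe that, by definition, ${\mathcal O}_{p}^{k}$ being a periodic sink means precisely that $p$ is a sink for the map $g = f^{k}$, and ${\mathcal O}_{p}^{k}$ being a periodic source means precisely that $p$ is a source for $g = f^{k}$. So the two items become: (1) if $g$ is strictly locally Lipschitz at $p$ with constant $c < 1$, then $p$ is a sink for $g$; and (2) if $g$ is locally reverse Lipschitz at $p$ with constant $r > 1$, then $p$ is a source for $g$.

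Next I would apply Theorem~\ref{main1} to the map $g$ itself. Since $p$ is a fixed point of $g$ by hypothesis, Item 1 of Theorem~\ref{main1} applied to $g$ gives that $p$ is a sink for $g$ under the hypothesis of Item 1, and Item 2 of Theorem~\ref{main1} applied to $g$ gives that $p$ is a source for $g$ under the hypothesis of Item 2. Unwinding the definitions of periodic sink and periodic source, this is exactly the conclusion of Theorem~\ref{main2}. I would write this out explicitly: for Item 1, there is an $\epsilon$-neighborhood on which $\lim_{n\to\infty} g^{n}(x) = p$, and $g^{n} = f^{kn}$, so the full forward orbit under $f$ of any point near $p$ converges to the periodic orbit; for Item 2, points near $p$ (other than $p$) are eventually pushed out of the $\epsilon$-neighborhood by iterates of $g$, which is the definition of $p$ being a source for $g$, hence of ${\mathcal O}_{p}^{k}$ being a periodic source.

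There is essentially no obstacle here, since Theorem~\ref{main1} is stated for an arbitrary map ${\mathbb R}\to{\mathbb R}$ and $g = f^{k}$ is such a map; the only thing to be careful about is the bookkeeping between ``sink/source for $g$'' and ``periodic sink/periodic source for $f$,'' which is purely a matter of citing the definitions. One minor point worth a sentence is that $g = f^{k}$ is automatically continuous (indeed the composition of Lipschitz-near-$p$ behaviour is inherited) so no extra regularity is needed beyond what is assumed; but in fact Theorem~\ref{main1} requires no global regularity at all, so even that remark is optional. The proof is therefore a two-line deduction:

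\begin{proof}
Recall that, by definition, ${\mathcal O}_{p}^{k}$ is a periodic sink if and only if $p$ is a sink for the map $g = f^{k}$, and ${\mathcal O}_{p}^{k}$ is a periodic source if and only if $p$ is a source for $g = f^{k}$. Since $p$ is a fixed point of $g$, Item 1-) follows by applying Item 1-) of Theorem~\ref{main1} to the map $g$: if $g$ is strictly locally Lipschitz at $p$ with constant $c < 1$, then $p$ is a sink for $g$, i.e.\ ${\mathcal O}_{p}^{k}$ is a periodic sink. Likewise, Item 2-) follows by applying Item 2-) of Theorem~\ref{main1} to $g$: if $g$ is locally reverse Lipschitz at $p$ with constant $r > 1$, then $p$ is a source for $g$, i.e.\ ${\mathcal O}_{p}^{k}$ is a periodic source. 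The proof is complete.
\end{proof}
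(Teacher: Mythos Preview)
Your proposal is correct and follows essentially the same approach as the paper: both reduce the statement directly to Theorem~\ref{main1} applied to the map $g=f^{k}$, using the definitions of periodic sink and periodic source. The paper's version additionally restates the intermediate inequality $|g^{l}(x)-p|<c^{l}|x-p|$ from the proof of Theorem~\ref{main1} before invoking that theorem, but this is redundant and your cleaner citation of Theorem~\ref{main1} suffices.
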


\begin{proof}
1-) Assume that $g$ is strictly locally Lipschitz map at $p$. Then
there exists an $\epsilon$-neighborhood $N_{\epsilon}(p)$ in which
$g$ is strictly Lipschitz. Furthermore, we have shown in the proof
of Theorem~\ref{main1} that $|g^{l}(x) - p| < c^{l}|x-p|$ for all $x
\in N_{\epsilon}(p)$. From Theorem~\ref{main1}, $p$ is a sink for
the map $g= f^{k}$, i.e., ${\mathcal O}_{p}^{k}$ is a periodic sink.

2-) The proof is the same to that of Item 2-) of Theorem~\ref{main1}
considering the map $g=f^{k}$.
\end{proof}

%

\subsection{Stability of fixed points in ${\mathbb
R}^{m}$}\label{sec3.3}

As usual, we denote vectors in ${\mathbb R}^{m}$ and maps on
${\mathbb R}^{m}$ by boldface letters. Let us consider the
$m$-dimensional real vector space ${\mathbb R}^{m}$ endowed with a
norm $\parallel\cdot
\parallel$ (in particular, the Euclidean norm). Let $\textbf{p} =
(p_1 , \ldots , p_{m} ),  \textbf{v} = (v_1 , \ldots , v_{m} ) \in
{\mathbb R}^{m}$ be two points (vectors). The
$\epsilon$-neighborhood $N_{\epsilon} (\textbf{p})$ of $\textbf{p}$
is defined by $N_{\epsilon} (\textbf{p})=\{\textbf{v} \in {\mathbb
R}^{m} :
\parallel \textbf{v} - \textbf{p} \parallel < \epsilon  \}$.

Let $\textbf{f}:{\mathbb R}^{m}\longrightarrow{\mathbb R}^{m}$ be a
map and let $\textbf{p} \in {\mathbb R}^{m}$ be a fixed point of
$\textbf{f}$, i.e., $\textbf{f}(\textbf{p})=\textbf{p}$. If there
exists an $\epsilon$-neighborhood $N_{\epsilon} (\textbf{p})$ of
$\textbf{p}$ such that $\forall \ \textbf{v} \in N_{\epsilon}
(\textbf{p})$,
$\displaystyle{\lim_{k\rightarrow\infty}}\textbf{f}^{k}(\textbf{v})=\textbf{p}$,
then $\textbf{p}$ is called a sink (or attracting fixed point). If
there exists an $N_{\epsilon} (\textbf{p})$ such that, $\forall \
\textbf{v} \in N_{\epsilon} (\textbf{p})$, except for $\textbf{p}$
itself, eventually maps outside of $N_{\epsilon} (\textbf{p})$, then
$\textbf{p}$ is called a source (or repeller).

If $\textbf{f}$ is a smooth map and $\textbf{p} \in {\mathbb
R}^{m}$, we represent $\textbf{f}$ in terms of its coordinates
functions $\textbf{f} = (f_1 , \ldots , f_{m})$. Let
$\textbf{Df}(\textbf{p})$ be the Jacobian matrix of $\textbf{f}$ at
$\textbf{p}$. With this notation in mind, we can state the following
well-known result:

\begin{theorem}[Thm. 2.11]\cite{Alligood:1996}\label{thm3}
Let $\textbf{f}:{\mathbb R}^{m}\longrightarrow{\mathbb R}^{m}$ be a
map and assume that $\textbf{p} \in {\mathbb R}^{m}$ is a fixed
point of $\textbf{f}$. If the magnitude of each eigenvalue of
$\textbf{Df}(\textbf{p})$ is less than $1$, then $\textbf{p}$ is a
sink; if the magnitude of each eigenvalue of
$\textbf{Df}(\textbf{p})$ is greater than $1$, then $\textbf{p}$ is
a source.
\end{theorem}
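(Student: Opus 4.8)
The plan is to deduce Theorem~\ref{thm3} from the one-dimensional contraction/repulsion estimates already established in the proof of Theorem~\ref{main1}, after replacing the Euclidean norm by a norm adapted to the linearization $A := \textbf{Df}(\textbf{p})$. The only genuinely new ingredient is a standard linear-algebra fact: writing $\rho(B)$ for the spectral radius of a square matrix $B$, for every $\delta > 0$ there is a norm $\|\cdot\|_{*}$ on $\mathbb{R}^{m}$ whose induced operator norm satisfies $\|A\|_{*} \le \rho(A) + \delta$, and, when $A$ is invertible, a (possibly different) norm with $\|A^{-1}\|_{*} \le \rho(A^{-1}) + \delta$, the latter being equivalent to $\|A\textbf{v}\|_{*} \ge \mu\,\|\textbf{v}\|_{*}$ for all $\textbf{v}$, with $\mu := (\rho(A^{-1}) + \delta)^{-1}$. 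Since all norms on $\mathbb{R}^{m}$ are equivalent, both the convergence of an orbit to $\textbf{p}$ and the property "eventually leaves an $\epsilon$-neighbourhood of $\textbf{p}$" are insensitive to which of $\|\cdot\|_{*}$ or $\|\cdot\|$ one uses (a $\|\cdot\|_{*}$-ball contains a Euclidean ball and conversely), so I would carry out the whole argument in the adapted norm and translate the conclusion at the end.

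\noindent\emph{Sink case.} Assume every eigenvalue of $A$ has magnitude less than $1$, so $\rho(A) < 1$. Fix $\delta > 0$ with $s := \rho(A) + \delta < 1$ and take $\|\cdot\|_{*}$ with $\|A\|_{*} \le s$. Smoothness of $\textbf{f}$ gives the first-order expansion $\textbf{f}(\textbf{x}) - \textbf{p} = A(\textbf{x} - \textbf{p}) + \textbf{R}(\textbf{x})$ with $\|\textbf{R}(\textbf{x})\|_{*}/\|\textbf{x} - \textbf{p}\|_{*} \to 0$ as $\textbf{x} \to \textbf{p}$; hence on a small enough $\epsilon$-neighbourhood $N_{\epsilon}(\textbf{p})$ (measured in $\|\cdot\|_{*}$) one gets $\|\textbf{f}(\textbf{x}) - \textbf{p}\|_{*} \le (s + \eta)\|\textbf{x} - \textbf{p}\|_{*}$ with $s + \eta < 1$. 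This is precisely the hypothesis of Item 1 of Theorem~\ref{main1}, now in $\mathbb{R}^{m}$: the neighbourhood is forward invariant, and the same induction as there yields $\|\textbf{f}^{k}(\textbf{x}) - \textbf{p}\|_{*} \le (s + \eta)^{k}\|\textbf{x} - \textbf{p}\|_{*} \to 0$, so $\textbf{p}$ is a sink.

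\noindent\emph{Source case.} Assume every eigenvalue of $A$ has magnitude greater than $1$. Then $A$ is invertible and $\rho(A^{-1}) < 1$; fix $\delta > 0$ with $\rho(A^{-1}) + \delta < 1$ and take $\|\cdot\|_{*}$ with $\|A\textbf{v}\|_{*} \ge \mu\|\textbf{v}\|_{*}$, $\mu > 1$. Using the same expansion and the reverse triangle inequality, $\|\textbf{f}(\textbf{x}) - \textbf{p}\|_{*} \ge \|A(\textbf{x} - \textbf{p})\|_{*} - \|\textbf{R}(\textbf{x})\|_{*} \ge (\mu - \eta)\|\textbf{x} - \textbf{p}\|_{*}$ on a small enough $N_{\epsilon}(\textbf{p})$, where $\eta$ can be made as small as needed, so $r := \mu - \eta > 1$. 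This is the hypothesis of Item 2 of Theorem~\ref{main1} in $\mathbb{R}^{m}$, and I would then repeat its escape argument verbatim: while the iterates of $\textbf{x} \ne \textbf{p}$ remain in $N_{\epsilon}(\textbf{p})$ one has $\|\textbf{f}^{k}(\textbf{x}) - \textbf{p}\|_{*} \ge r^{k}\|\textbf{x} - \textbf{p}\|_{*}$, and since $r > 1$ and $\|\textbf{x} - \textbf{p}\|_{*} > 0$ is fixed, the right-hand side eventually exceeds $\epsilon$, forcing some iterate out of $N_{\epsilon}(\textbf{p})$; hence $\textbf{p}$ is a source.

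The step I expect to be the main obstacle is the construction of the adapted norm $\|\cdot\|_{*}$ bringing $\|A\|_{*}$ (resp. $1/\|A^{-1}\|_{*}$) within $\delta$ of $\rho(A)$ (resp. $\rho(A^{-1})$); everything else is a transcription of the proof of Theorem~\ref{main1}. I would settle it via the Jordan canonical form: write $A = PJP^{-1}$ (over $\mathbb{C}$, then pass to the real form), and inside each Jordan block conjugate $J$ by $\mathrm{diag}(1, t, t^{2}, \ldots)$ so that the off-diagonal $1$'s become $t$'s; for $t > 0$ small the row-sum operator norm of the conjugated matrix is at most $\rho(A) + \delta$, and pulling this norm back through $P$ gives $\|\cdot\|_{*}$. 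Applying the same construction to $A^{-1}$ (whose eigenvalues are the reciprocals of those of $A$) furnishes the norm needed for the source case, which completes the plan.
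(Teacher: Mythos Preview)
The paper does not prove Theorem~\ref{thm3}: it is quoted verbatim as a known result from \cite[Thm.~2.11]{Alligood:1996} and is used only as motivation for the Lipschitz version, Theorem~\ref{main3}. So there is no ``paper's own proof'' to compare against.

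That said, your argument is the standard and correct one: approximate the spectral radius by an operator norm via a Jordan-form rescaling, feed the differentiability of $\textbf{f}$ at $\textbf{p}$ into a local (reverse) Lipschitz estimate in the adapted norm, and then invoke the contraction/escape iteration of Theorem~\ref{main1}. The only point worth flagging is terminological: the statement of Theorem~\ref{thm3} as written merely says ``map'', not ``smooth map'', so strictly speaking the Jacobian $\textbf{Df}(\textbf{p})$ need not exist; you (reasonably) read in the intended smoothness hypothesis, and your use of the first-order Taylor remainder requires differentiability at $\textbf{p}$, which is exactly what the cited textbook assumes.
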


On the vector space ${\mathbb R}^{m}$, the concept of Lipschitz map
reads as follows.

Let $\textbf{f}:{\mathbb R}^{m}\longrightarrow{\mathbb R}^{m}$ be a
map. We say that $\textbf{f}$ is Lipschitz if there exists a
constant $c \in {\mathbb R}$, $c
> 0$ such that $\forall \
\textbf{v}, \textbf{w} \in {\mathbb R}^{m} \Longrightarrow
\parallel \textbf{f}(\textbf{v}) - \textbf{f}(\textbf{w}) \parallel \leq c \parallel
\textbf{v} - \textbf{w} \parallel$, where $\parallel \cdot
\parallel$ denotes a norm over ${\mathbb R}^{m}$. If $\forall \ \textbf{v}, \textbf{w}
\in {\mathbb R}^{m} \Longrightarrow \parallel \textbf{f}(\textbf{v})
- \textbf{f}(\textbf{w})
\parallel < c \parallel \textbf{v}- \textbf{w} \parallel$, we say that
$\textbf{f}$ is strictly Lipschitz.

The next result is a natural generalization of Theorem~\ref{main1}
to the Euclidean space ${\mathbb R}^{m}$.

\begin{theorem}(Stability test for fixed points on
${\mathbb R}^{m}$)\label{main3} Let $\textbf{f}:{\mathbb
R}^{m}\longrightarrow{\mathbb R}^{m}$ be a map and let $\textbf{p}
\in {\mathbb R}^{m}$ a fixed point of $\textbf{f}$.
\begin{itemize}
\item [ 1-] If $\textbf{f}$ is strictly
locally Lipschitz map at $\textbf{p}$, with Lipschitz constant $c <
1$, then $\textbf{p}$ is a sink.

\item [ 2-] If $\textbf{f}$ is locally reverse Lipschitz map at $\textbf{p}$, with constant
$r > 1$, then $\textbf{p}$ is a source.
\end{itemize}
\end{theorem}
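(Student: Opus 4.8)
The plan is to transcribe the proof of Theorem~\ref{main1} almost verbatim, replacing the absolute value $| \cdot |$ on ${\mathbb R}$ by the norm $\parallel \cdot \parallel$ on ${\mathbb R}^{m}$. The one-dimensional argument used only three ingredients: the non-negativity of $| \cdot |$; the fact that $\textbf{v} \in N_{\epsilon}(\textbf{p})$ if and only if $\parallel \textbf{v} - \textbf{p} \parallel < \epsilon$; and the (reverse) Lipschitz estimates together with $\textbf{f}(\textbf{p}) = \textbf{p}$. All three have exact analogues in the normed setting, and no use is made of the order structure of ${\mathbb R}$ or of the intermediate value theorem, so the generalization goes through with no essential change.

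For Item~1, the hypothesis provides an $\epsilon$-neighborhood $N_{\epsilon}(\textbf{p})$ on which $\parallel \textbf{f}(\textbf{v}) - \textbf{f}(\textbf{w}) \parallel < c \parallel \textbf{v} - \textbf{w} \parallel$. Setting $\textbf{w} = \textbf{p}$ and using $\textbf{f}(\textbf{p}) = \textbf{p}$, every $\textbf{v} \in N_{\epsilon}(\textbf{p})$ satisfies $\parallel \textbf{f}(\textbf{v}) - \textbf{p} \parallel < c \parallel \textbf{v} - \textbf{p} \parallel < \parallel \textbf{v} - \textbf{p} \parallel < \epsilon$, so $\textbf{f}(\textbf{v}) \in N_{\epsilon}(\textbf{p})$; by induction the entire forward orbit of $\textbf{v}$ stays in $N_{\epsilon}(\textbf{p})$. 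Then the same induction on $k$ as in Theorem~\ref{main1} gives $\parallel \textbf{f}^{k}(\textbf{v}) - \textbf{p} \parallel < c^{k} \parallel \textbf{v} - \textbf{p} \parallel$ for all $k \geq 1$; since $c < 1$, the right-hand side tends to $0$, whence $\displaystyle{\lim_{k \rightarrow \infty}} \textbf{f}^{k}(\textbf{v}) = \textbf{p}$ and $\textbf{p}$ is a sink.

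For Item~2, the hypothesis provides an $\epsilon$-neighborhood $N_{\epsilon}(\textbf{p})$ on which $\parallel \textbf{f}(\textbf{v}) - \textbf{p} \parallel \geq r \parallel \textbf{v} - \textbf{p} \parallel$ (again using $\textbf{f}(\textbf{p}) = \textbf{p}$). Fix $\textbf{v} \in N_{\epsilon}(\textbf{p})$ with $\textbf{v} \neq \textbf{p}$ and iterate: as long as $\textbf{f}(\textbf{v}), \ldots, \textbf{f}^{k-1}(\textbf{v})$ all lie in $N_{\epsilon}(\textbf{p})$, repeated use of the reverse Lipschitz estimate yields $\parallel \textbf{f}^{k}(\textbf{v}) - \textbf{p} \parallel \geq r^{k} \parallel \textbf{v} - \textbf{p} \parallel$. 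Because $r > 1$ and $\parallel \textbf{v} - \textbf{p} \parallel > 0$ is a fixed number, there is a least integer $k^{*} \geq 1$ with $r^{k^{*}} \parallel \textbf{v} - \textbf{p} \parallel \geq \epsilon$; for this $k^{*}$, either some earlier iterate has already left $N_{\epsilon}(\textbf{p})$, or the estimate applies and $\parallel \textbf{f}^{k^{*}}(\textbf{v}) - \textbf{p} \parallel \geq r^{k^{*}} \parallel \textbf{v} - \textbf{p} \parallel \geq \epsilon$. In either case the orbit of $\textbf{v}$ eventually leaves $N_{\epsilon}(\textbf{p})$, so $\textbf{p}$ is a source.

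I do not expect any genuine obstacle. The only point worth a moment's care is that the $\epsilon$-neighborhood defining the dynamics and the (reverse) Lipschitz inequality are measured with the same norm $\parallel \cdot \parallel$ --- which, in the statement, they are --- so no norm-equivalence constants enter the estimates above. Had the two been phrased with different norms on ${\mathbb R}^{m}$, equivalence of all norms on ${\mathbb R}^{m}$ would absorb the discrepancy into the constants, and the conclusions would persist as long as the contraction constant remains below $1$ (respectively, the expansion constant above $1$) after the rescaling.
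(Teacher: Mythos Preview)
Your proposal is correct and follows essentially the same approach as the paper: both proofs explicitly state that the argument is the one-dimensional proof of Theorem~\ref{main1} with the absolute value replaced by the norm $\parallel\cdot\parallel$, and then carry out the identical invariance-of-$N_{\epsilon}(\textbf{p})$ plus induction for Item~1 and the iterated reverse-Lipschitz escape argument for Item~2. Your closing remark about the same norm being used for the neighborhood and the Lipschitz estimate is a small addition not present in the paper, but otherwise the arguments coincide.
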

\begin{proof}
The proofs of both items are the same to that of Theorem~\ref{main1}
only changing the absolute value function $||$ on ${\mathbb R}$ by a
norm $\parallel \parallel$ on ${\mathbb R}^{m}$.

Item 1) We know there exists an $\epsilon$-neighborhood
$N_{\epsilon}(\textbf{p})$ of $\textbf{p}$ such that $\parallel
\textbf{f}(\textbf{x}) - \textbf{f}(\textbf{p})\parallel <
c\parallel \textbf{x} - \textbf{p} \parallel$ for all $\textbf{x}
\in N_{\epsilon}(\textbf{p})$. Therefore, if $\textbf{x} \in
N_{\epsilon}(\textbf{p})$ then $\parallel \textbf{f}(\textbf{x}) -
\textbf{f}(\textbf{p})\parallel < \epsilon$, i.e.,
$\textbf{f}(\textbf{x}) \in N_{\epsilon}(\textbf{p})$. By the same
argument, it follows that ${\textbf{f}^{2}}(\textbf{x}), \ldots ,
{\textbf{f}}^{n}(\textbf{x}), \ldots$ also belong to
$N_{\epsilon}(\textbf{p})$. Applying induction, we can show that
$\parallel {\textbf{f}}^{k}(\textbf{x}) - \textbf{p}\parallel<
c^{k}\parallel \textbf{x}-\textbf{p}\parallel$, $\forall \ \textbf{x
}\in N_{\epsilon}(\textbf{p})$, holds for all $k\geq 1$. Since $c <
1$ it follows that
$\displaystyle{\lim_{k\rightarrow\infty}}c^{k+1}\parallel \textbf{x}
- \textbf{p}\parallel=0$, so $\textbf{p}$ is a sink.

Item 2-) There exists an $\epsilon$-neighborhood
$N_{\epsilon}(\textbf{p})$ of $\textbf{p}$ such that $\parallel
\textbf{f}(\textbf{x}) - \textbf{p}\parallel \geq r\parallel
\textbf{x} - \textbf{p}\parallel$ for all $\textbf{x} \in
N_{\epsilon}(\textbf{p})$. Let us consider $\textbf{x} \in
N_{\epsilon}(\textbf{p})$, $\textbf{x} \neq \textbf{p}$. If
$\parallel \textbf{f}(\textbf{x}) - \textbf{p}\parallel \geq
\epsilon$, there is nothing to prove. Otherwise,
$\parallel\textbf{f}(\textbf{x}) - \textbf{p}\parallel < \epsilon$,
which implies that $\textbf{f}(\textbf{x}) \in
N_{\epsilon}(\textbf{p})$. Because $\textbf{f}$ is locally reverse
Lipschitz in $N_{\epsilon}(\textbf{p})$, one has $\parallel
{\textbf{f}}^{2}(\textbf{x}) - \textbf{p}\parallel \geq
r^{2}\parallel \textbf{x} - \textbf{p}\parallel$. If $\parallel
{\textbf{f}}^{2}(\textbf{x}) - \textbf{p}\parallel \geq \epsilon$,
the result holds. Otherwise, $\parallel \textbf{f}^{2}(\textbf{x}) -
\textbf{p}\parallel < \epsilon$, i.e.,
${\textbf{f}}^{2}(\textbf{x})\in N_{\epsilon}(\textbf{p})$.
Proceeding similarly as in the proof of Theorem~\ref{main1}, the
results follows.
\end{proof}

\begin{corollary}\label{main3.1}
Let $\textbf{f}:{\mathbb
R}^{m}\longrightarrow{\mathbb R}^{m}$ be a map.
\begin{itemize}
\item [ 1-] If $\textbf{f}$ is strictly
Lipschitz map with Lipschitz constant $c < 1$, then there is only
one fixed point $\textbf{p}$ which is a sink.

\item [ 2-] If $\textbf{f}$ is reverse Lipschitz map with constant
$r > 1$, then all fixed point $\textbf{p}$ is a source.
\end{itemize}
\end{corollary}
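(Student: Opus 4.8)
The plan is to deduce both items from the results already established in this subsection together with the classical contraction mapping principle. For Item 1, note that a strictly Lipschitz map with constant $c<1$ is in particular a (non-strict) contraction on the complete space $({\mathbb R}^{m},\parallel\cdot\parallel)$, so I would first recover existence and uniqueness of a fixed point by the standard Banach argument: pick any $\textbf{x}_{0}\in{\mathbb R}^{m}$, set $\textbf{x}_{n}=\textbf{f}^{n}(\textbf{x}_{0})$, and use $\parallel\textbf{x}_{n+1}-\textbf{x}_{n}\parallel\le c^{n}\parallel\textbf{x}_{1}-\textbf{x}_{0}\parallel$ together with the triangle inequality and $\sum_{n\ge 0}c^{n}<\infty$ to conclude that $(\textbf{x}_{n})$ is Cauchy, hence convergent to some $\textbf{p}$; continuity of $\textbf{f}$ (automatic from the Lipschitz condition) then gives $\textbf{f}(\textbf{p})=\textbf{p}$. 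Uniqueness is immediate: if $\textbf{f}(\textbf{p})=\textbf{p}$ and $\textbf{f}(\textbf{q})=\textbf{q}$ with $\textbf{p}\ne\textbf{q}$, then $\parallel\textbf{p}-\textbf{q}\parallel=\parallel\textbf{f}(\textbf{p})-\textbf{f}(\textbf{q})\parallel<c\parallel\textbf{p}-\textbf{q}\parallel$, contradicting $c<1$. Finally, since $\textbf{f}$ is globally strictly Lipschitz it is strictly locally Lipschitz at $\textbf{p}$, so Item 1 of Theorem~\ref{main3} applies and $\textbf{p}$ is a sink.

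For Item 2, I would argue exactly as in Item 2 of Theorem~\ref{main3}, but now exploiting that the reverse Lipschitz inequality holds on all of ${\mathbb R}^{m}$, which removes the need to keep track of whether the iterates remain in a prescribed neighbourhood. Concretely, fix a fixed point $\textbf{p}$, an arbitrary $\epsilon>0$, and any $\textbf{x}\in N_{\epsilon}(\textbf{p})$ with $\textbf{x}\ne\textbf{p}$. By induction on $k$, using $\textbf{f}^{k}(\textbf{p})=\textbf{p}$ and the reverse Lipschitz condition at each step,
\[
\parallel\textbf{f}^{k}(\textbf{x})-\textbf{p}\parallel=\parallel\textbf{f}(\textbf{f}^{k-1}(\textbf{x}))-\textbf{f}(\textbf{p})\parallel\ge r\parallel\textbf{f}^{k-1}(\textbf{x})-\textbf{p}\parallel\ge\cdots\ge r^{k}\parallel\textbf{x}-\textbf{p}\parallel .
\]
Since $r>1$ and $\parallel\textbf{x}-\textbf{p}\parallel>0$ is a fixed number, there is a positive integer $k^{*}$ with $r^{k^{*}}\parallel\textbf{x}-\textbf{p}\parallel\ge\epsilon$, hence $\parallel\textbf{f}^{k^{*}}(\textbf{x})-\textbf{p}\parallel\ge\epsilon$; thus $\textbf{p}$ is a source. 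One may also remark, as in the scalar case, that reverse Lipschitz with $r>1$ forces at most one fixed point, since $\parallel\textbf{p}-\textbf{q}\parallel\ge r\parallel\textbf{p}-\textbf{q}\parallel$ entails $\textbf{p}=\textbf{q}$.

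The only genuinely substantive step is the existence half of Item 1 --- verifying that the Picard iterates form a Cauchy sequence and invoking completeness of ${\mathbb R}^{m}$; everything else is either a one-line contradiction argument or a direct appeal to Theorem~\ref{main3}. Indeed, since this is precisely the Banach contraction theorem on ${\mathbb R}^{m}$, one could alternatively just cite it (as was done for Item 1 of Corollary~\ref{main1.1}) and then append the short observation that Theorem~\ref{main3} upgrades the conclusion to ``the fixed point is a sink''.
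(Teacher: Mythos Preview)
Your proposal is correct and follows essentially the same approach as the paper: Item~1 is the Banach contraction theorem (the paper simply cites it, while you spell out the Picard-iterate argument and then invoke Theorem~\ref{main3} for the sink conclusion), and Item~2 is the same iterative reverse-Lipschitz bound $\parallel\textbf{f}^{k}(\textbf{x})-\textbf{p}\parallel\ge r^{k}\parallel\textbf{x}-\textbf{p}\parallel$ leading to eventual escape from $N_{\epsilon}(\textbf{p})$. Your presentation of Item~2 via a single induction is cleaner than the paper's step-by-step ``if $\ge\epsilon$ done, otherwise continue'' narrative, but the underlying argument is identical.
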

\begin{proof}
Item 1-) is the well-known Banach contraction theorem (see for
example \cite[Thm. 5.2.1]{Martelli:1999}).

The proof of Item 2) is similar to that of Corollary~\ref{main1.1}.
We present it here for completeness. Since $\textbf{f}$ is locally
reverse Lipschitz map at $\textbf{p}$, it follows that $\parallel
\textbf{f}(\textbf{x}) - \textbf{p}\parallel \geq r\parallel
\textbf{x} - \textbf{p}\parallel$ for all $\textbf{x} \in {\mathbb
R}^{m}$. Let us consider a fixed vector $\textbf{x} \in {\mathbb
R}^{m}$, $\textbf{x} \neq \textbf{p}$. If
$\parallel\textbf{f}(\textbf{x}) - \textbf{p}\parallel \geq
\epsilon$, then the result follows. Otherwise, because $\textbf{f}$
is reverse Lipschitz at $\textbf{p}$, it follows that $\parallel
{\textbf{f}}^{2}(\textbf{x}) - \textbf{p} \parallel \geq r
\parallel \textbf{f}(\textbf{x}) - \textbf{p}\parallel\geq r^{2}\parallel \textbf{x} - \textbf{p}\parallel$. If
$\parallel {\textbf{f}}^{2}(\textbf{x}) - \textbf{p}\parallel\geq
\epsilon$, the result holds. On the other hand, $\parallel
{\textbf{f}}^{2}(\textbf{x}) - \textbf{p}\parallel < \epsilon$.
Applying again the fact that $\textbf{f}$ is is reverse Lipschitz at
$\textbf{p}$ one has $\parallel {\textbf{f}}^{3}(\textbf{x}) -
\textbf{p}\parallel \geq r\parallel {\textbf{f}}^{2}(\textbf{x}) -
\textbf{p}\parallel\geq r^{3}\parallel \textbf{x} -
\textbf{p}\parallel$. If $\parallel {\textbf{f}}^{3}(\textbf{x}) -
\textbf{p}\parallel\geq \epsilon$, then the result follows.
Otherwise, applying repeatedly this procedure, there will be an
integer $k_0 \geq 1$ such that $\parallel
{\textbf{f}}^{k_0}(\textbf{x}) - \textbf{p}\parallel \geq \epsilon$.
\end{proof}

\begin{remark}
Note that the procedure utilized in Subsection~\ref{sec3.2} can be
easily adapted to generate analogous results for the stability of
periodic orbits of maps defined over ${\mathbb R}^{m}$. Since both
proofs are similar, we do not present the last one here.
\end{remark}

\subsection{Lyapunov exponent}\label{sec3.4}

In this subsection, we introduce in the literature the Lyapunov
number and the Lyapunov exponent for Lipschitz maps. We only
consider the case of maps defined over ${\mathbb R}$ (or over any
subset of ${\mathbb R}$), since the procedure for maps on ${\mathbb
R}^{n}$ (or over any subset of ${\mathbb R}$) is quite similar.

We denote by ${\mathcal O}_{x_1}= \{ x_1 , x_2, x_3 , \ldots \}$ an
arbitrary orbit with initial point $x_1 \in {\mathbb R}$, where
$x_2= f(x_1), x_3 = f^{2}(x_1), x_4 = f^{3}(x_1), \ldots$. Assume
that $f$ is a smooth map on ${\mathbb R}$ and $x_1 \in {\mathbb R}$.
Recall that the Lyapunov number $L(x_1)$
\cite{Alligood:1996,Martelli:1999} of the orbit ${\mathcal O}_{x_1}=
\{ x_1 , x_2, x_3 , \ldots \}$ is defined as
$L(x_1)=\displaystyle{\lim_{n\rightarrow\infty}}(|f^{'}(x_1)|\cdots
|f^{'}(x_n)|)^{1/n},$ if the limit exists. The Lyapunov exponent
$h(x_1)$ is defined as $h(x_1)=
\displaystyle{\lim_{n\rightarrow\infty}}$
$(1/n)[\ln|f^{'}(x_1)|+\cdots +\ln|f^{'}(x_n)|],$ if the limit
exists.

Let ${\mathcal O}_{x_1}=\{x_1, x_2, \ldots, x_n, \ldots \}$ be an
orbit and let ${\mathcal O}_{y_1}^{k}=\{y_1, y_2, \ldots, y_k\}$ be
a $k$-periodic orbit (see beginning of Subsection~\ref{sec3.2}). We
say that the orbit ${\mathcal O}_{x_1}$ is \emph{asymptotically
periodic} (see for instance \cite[Definition 3.3]{Alligood:1996}) if
it converges to a periodic orbit ${\mathcal O}_{y_1}^{k}$ for some
integer $k\geq 1$ and $y_1 \in {\mathbb R}$, when
$n\longrightarrow\infty$. In other words, there exists a periodic
orbit $ \{y_1, y_2, \ldots, y_k\}= \{y_1, y_2, \ldots, y_k, y_1,
y_2, \ldots, y_k, \ldots\}$ such that
$\displaystyle{\lim_{n\rightarrow\infty}} |x_n - y_n |=0$.

Until now in this subsection, we recall the concepts of Lyapunov
number and Lyapunov exponent for smooth maps, which are well-known
in the literature. Now, we will propose the definition of Lyapunov
number and Lyapunov exponent for Lipschitz maps, which are not
necessarily differentiable. The first result that will be utilized
to this goal is due to Rademacher:

\begin{theorem}(Rademacher's theorem)[Thm. 3.1.6.]\cite{Federer:1996} (see also
\cite{Rademacher:1919})\label{rad1} If $\textbf{f}: {\mathbb
R}^{m}\longrightarrow {\mathbb R}^{n}$ is a Lipschitz map, then
$\textbf{f}$ is differentiable at Lebesgue almost all points of
${\mathbb R}^{m}$.
\end{theorem}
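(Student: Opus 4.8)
The plan is to follow the classical three-stage argument: reduce first to scalar-valued functions, then establish existence of directional and partial derivatives almost everywhere using one-dimensional facts and Fubini's theorem, and finally upgrade these to full differentiability by a compactness argument on the unit sphere.

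First, since $\textbf{f} = (f_1, \ldots, f_n)$ is differentiable at a point precisely when each coordinate $f_i$ is, and a finite intersection of full-measure sets has full measure, it suffices to treat a Lipschitz function $f \colon \mathbb{R}^m \to \mathbb{R}$. The one fact from real analysis I would invoke is that a Lipschitz function of one real variable is absolutely continuous on every compact interval and hence differentiable at Lebesgue-almost every point (Lebesgue's differentiation theorem for monotone, equivalently $BV$, functions). Fixing a unit vector $v$, the map $t \mapsto f(x + tv)$ is Lipschitz on $\mathbb{R}$, so the directional derivative $D_v f(x) = \lim_{t \to 0} \frac{f(x+tv) - f(x)}{t}$ exists for almost every $x$ on each line parallel to $v$; Fubini's theorem then gives that $D_v f$ exists on a set $A_v \subseteq \mathbb{R}^m$ whose complement has Lebesgue measure zero. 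Applying this to the coordinate directions $e_1, \ldots, e_m$ shows that the gradient $\nabla f(x) = (\partial_1 f(x), \ldots, \partial_m f(x))$ exists almost everywhere.

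The key step, and the one I expect to be the main obstacle, is to show that the directional derivatives are linear in the direction almost everywhere, i.e. $D_v f(x) = \nabla f(x) \cdot v$ for a.e. $x$, for each fixed $v$. This I would do by a distributional integration-by-parts argument: for any test function $\varphi \in C_c^\infty(\mathbb{R}^m)$, a change of variables and dominated convergence (the difference quotients are bounded by $\mathrm{Lip}(f)$) give $\int_{\mathbb{R}^m} D_v f \,\varphi \, dx = -\int_{\mathbb{R}^m} f\, D_v \varphi \, dx$ for every $v$, including the coordinate directions; combining these, $-\int f\, D_v\varphi \, dx = -\sum_{j=1}^m v_j \int f\, \partial_j \varphi \, dx = \sum_{j=1}^m v_j \int \partial_j f\, \varphi \, dx = \int (\nabla f \cdot v)\,\varphi \, dx$. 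Since $\varphi$ is arbitrary, $D_v f = \nabla f \cdot v$ almost everywhere. Now fix a countable dense set $\{v_k\}_{k\ge 1}$ in the unit sphere $S^{m-1}$ of $\mathbb{R}^m$; intersecting the corresponding full-measure sets yields a set $E$ whose complement has measure zero such that, at every $x \in E$, the gradient $\nabla f(x)$ exists and $D_{v_k} f(x) = \nabla f(x) \cdot v_k$ for all $k$.

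Finally I would prove that $f$ is differentiable at every point of $E$. Fix $x \in E$ and set, for $v \in S^{m-1}$ and $t \neq 0$, $Q(v, t) = \frac{f(x + tv) - f(x)}{t} - \nabla f(x) \cdot v$; differentiability at $x$ is equivalent to $Q(v,t) \to 0$ as $t \to 0$ uniformly in $v \in S^{m-1}$. Two observations finish the argument. First, $v \mapsto Q(v,t)$ is Lipschitz uniformly in $t$, since $|Q(v,t) - Q(w,t)| \le (\mathrm{Lip}(f) + |\nabla f(x)|)\,|v - w|$. Second, $Q(v_k,t) \to 0$ as $t \to 0$ for every $k$, by the definition of $E$. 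Given $\varepsilon > 0$, choose by compactness finitely many points $v_{k_1}, \ldots, v_{k_N}$ that are $\varepsilon$-dense in $S^{m-1}$, and then $\delta > 0$ with $|Q(v_{k_i}, t)| < \varepsilon$ for all $i$ whenever $0 < |t| < \delta$. For arbitrary $v \in S^{m-1}$, picking $v_{k_i}$ with $|v - v_{k_i}| < \varepsilon$ gives $|Q(v,t)| \le |Q(v_{k_i},t)| + (\mathrm{Lip}(f) + |\nabla f(x)|)\,\varepsilon < \bigl(1 + \mathrm{Lip}(f) + |\nabla f(x)|\bigr)\varepsilon$ for all $0 < |t| < \delta$. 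This is the required uniform convergence, so $f$ is differentiable at $x$, and since $\mathbb{R}^m \setminus E$ is a null set the theorem follows.
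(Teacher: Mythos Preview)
Your proof is correct and follows the classical argument (essentially the proof found in Evans--Gariepy or Heinonen's lecture notes): reduction to the scalar case, existence of directional derivatives via Lebesgue's one-dimensional theorem and Fubini, the distributional integration-by-parts identity $D_v f = \nabla f \cdot v$ a.e., and the final compactness argument on $S^{m-1}$ using a countable dense set of directions.

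There is nothing to compare, however: the paper does not prove Rademacher's theorem at all. It is quoted as a known result with a citation to Federer's \emph{Geometric Measure Theory} (Theorem~3.1.6) and to Rademacher's original 1919 paper, and is used only as a tool to justify Definition~\ref{L1}. So your write-up goes well beyond what the paper itself supplies; if anything, it would serve as a self-contained appendix proof should one wish to make the paper independent of the cited references.
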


A variant of this result is given below.

\begin{theorem}\cite[Thm.
3.1]{Heinonen:2004}\label{rad2} Let $\Omega \subset {\mathbb R}^{m}$
be an open set, and let $\textbf{f}: \Omega\longrightarrow {\mathbb
R}^{n}$ be a Lipschitz map. Then $\textbf{f}$ is differentiable at
almost every point (Lebesgue) in $\Omega$.
\end{theorem}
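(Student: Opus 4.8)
The plan is to reduce this open-set version to the global Rademacher theorem (Theorem~\ref{rad1}), which has already been recorded. The key observation is that a Lipschitz map on an open set extends to a Lipschitz map on all of ${\mathbb R}^{m}$, after which differentiability almost everywhere is inherited on $\Omega$ because differentiability is a purely local property and $\Omega$ is open.

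\emph{Step 1 (extension).} Write $\textbf{f} = (f_1 , \ldots , f_n)$ in terms of its coordinate functions $f_i : \Omega \longrightarrow {\mathbb R}$, each of which is Lipschitz with constant at most the Lipschitz constant $c$ of $\textbf{f}$. By the McShane extension theorem --- explicitly, via the infimal convolution $\tilde f_i (\textbf{x}) := \inf_{\textbf{y} \in \Omega} \big( f_i (\textbf{y}) + c \parallel \textbf{x} - \textbf{y} \parallel \big)$ --- each $f_i$ extends to a Lipschitz function $\tilde f_i : {\mathbb R}^{m} \longrightarrow {\mathbb R}$ that agrees with $f_i$ on $\Omega$. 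One checks that the infimum is finite (it is bounded below by $f_i(\textbf{y}_0) - c\parallel \textbf{x} - \textbf{y}_0\parallel$ for any fixed $\textbf{y}_0 \in \Omega$, using the triangle inequality and the Lipschitz bound for $f_i$) and that $\tilde f_i$ is globally $c$-Lipschitz. Setting $\tilde{\textbf{f}} := (\tilde f_1 , \ldots , \tilde f_n)$ produces a Lipschitz map $\tilde{\textbf{f}} : {\mathbb R}^{m} \longrightarrow {\mathbb R}^{n}$ with $\tilde{\textbf{f}}|_{\Omega} = \textbf{f}$.

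\emph{Step 2 (apply Rademacher and restrict).} By Theorem~\ref{rad1} applied to $\tilde{\textbf{f}}$, there is a Lebesgue-null set $Z \subset {\mathbb R}^{m}$ such that $\tilde{\textbf{f}}$ is differentiable at every point of ${\mathbb R}^{m} \setminus Z$. Fix $\textbf{x} \in \Omega \setminus Z$. Since $\Omega$ is open, some neighbourhood $N_{\epsilon}(\textbf{x})$ is contained in $\Omega$, and on $N_{\epsilon}(\textbf{x})$ the maps $\tilde{\textbf{f}}$ and $\textbf{f}$ coincide; as differentiability at $\textbf{x}$ depends only on the values of the map in an arbitrarily small neighbourhood of $\textbf{x}$, it follows that $\textbf{f}$ is differentiable at $\textbf{x}$ (with $\textbf{Df}(\textbf{x}) = \textbf{D}\tilde{\textbf{f}}(\textbf{x})$). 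Hence the set of points of $\Omega$ at which $\textbf{f}$ fails to be differentiable is contained in $Z \cap \Omega$, which has Lebesgue measure zero, proving the claim.

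The only step requiring genuine care is the Lipschitz extension in Step~1: one must verify that the infimal-convolution formula is finite, agrees with $f_i$ on $\Omega$, and is globally Lipschitz with a controlled constant. Everything else is immediate, the crucial point being that openness of $\Omega$ is precisely what allows one to transfer differentiability from $\tilde{\textbf{f}}$ to $\textbf{f}$ at interior points. An alternative route avoiding global extension is to cover $\Omega$ by countably many open balls $B_j$ with $\overline{B_j} \subset \Omega$ (possible since ${\mathbb R}^{m}$ is second countable), run the extension--Rademacher argument on each $B_j$, and observe that a countable union of Lebesgue-null sets is null; however, this still relies on the same extension idea, so the direct argument above is preferable.
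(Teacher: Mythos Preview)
Your argument is correct: the McShane extension in Step~1 produces a globally Lipschitz $\tilde{\textbf{f}}$ agreeing with $\textbf{f}$ on $\Omega$, Theorem~\ref{rad1} gives a.e.\ differentiability of $\tilde{\textbf{f}}$, and openness of $\Omega$ transfers this back to $\textbf{f}$. The verification sketch for the infimal-convolution formula is accurate, and the alternative covering argument you mention is also sound.

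As for comparison with the paper: there is nothing to compare against. The paper does not prove Theorem~\ref{rad2}; it is quoted as an external result from \cite{Heinonen:2004} (just as Theorem~\ref{rad1} is quoted from \cite{Federer:1996}) and is used only to justify that a locally Lipschitz map is differentiable a.e.\ on its domain, so that Definitions~\ref{L1}--\ref{L2} make sense. Your proof therefore supplies content that the paper deliberately omits, and the reduction you chose --- extend each coordinate, invoke the global Rademacher theorem, restrict --- is the standard one found in the reference the paper cites.
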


From Rademacher's theorem, we can guarantee that a Lipschitz map
$f:{\mathbb R}\longrightarrow{\mathbb R}$ is differentiable in a set
$X = {\mathbb R}- Y$, where the set $Y$ has zero Lebesgue measure.
In this new context, we can define the Lyapunov number and the
Lyapunov exponent as well as the concept of asymptotically periodic
orbit for Lipschitz maps.

\begin{definition}\label{L1}
Let $f:{\mathbb R}\longrightarrow{\mathbb R}$ be a Lipschitz map and
assume that ${\mathcal O}_{x_1}\subset X$. Then the Lyapunov number
$L(x_1)$ of the orbit ${\mathcal O}_{x_1}= \{ x_1 , x_2, x_3 ,
\ldots \}$ is defined as
\begin{eqnarray}
L(x_1)=\displaystyle{\lim_{n\rightarrow\infty}}(|f^{'}(x_1)|\cdots
|f^{'}(x_n)|)^{1/n},
\end{eqnarray}
if the limit exists.

The Lyapunov exponent $h(x_1)$ is defined as
\begin{eqnarray}
h(x_1)=
\displaystyle{\lim_{n\rightarrow\infty}}(1/n)[\ln|f^{'}(x_1)|+\cdots
+\ln|f^{'}(x_n)|],
\end{eqnarray}
if the limit exists.
\end{definition}

Recall that a map $f: {\mathbb R}\longrightarrow {\mathbb R}$ is
said to be \emph{locally Lipschitz} on an open interval $(a,
b)\subset {\mathbb R}$ if $f$ restricted to $(a, b)$ is Lipschitz.
In terms of locally Lipschitz maps, we have the following variant to
Definition~\ref{L1}.

\begin{definition}\label{L2}
Let $f:{\mathbb R}\longrightarrow{\mathbb R}$ be a locally Lipschitz
map in a (nondegenerate) open interval $(a, b)$, and assume that
${\mathcal O}_{x_1}\subset (a, b)\cap X$. Then the Lyapunov number
$L(x_1)$ of the orbit ${\mathcal O}_{x_1}= \{ x_1 , x_2, x_3 ,
\ldots \}$ is defined as
\begin{eqnarray}
L(x_1)=\displaystyle{\lim_{n\rightarrow\infty}}(|f^{'}(x_1)|\cdots
|f^{'}(x_n)|)^{1/n},
\end{eqnarray}
if the limit exists.

The Lyapunov exponent $h(x_1)$ is defined as
\begin{eqnarray}
h(x_1)=
\displaystyle{\lim_{n\rightarrow\infty}}(1/n)[\ln|f^{'}(x_1)|+\cdots
+\ln|f^{'}(x_n)|],
\end{eqnarray}
if the limit exists.
\end{definition}

Let us reformulate the concept of \emph{asymptotically periodic
orbit} in terms of Lipschitz maps.

\begin{definition}\label{L3}
Let $f:{\mathbb R}\longrightarrow{\mathbb R}$ be a Lipschitz map. An
orbit $\{x_1, x_2, \ldots, x_n, \ldots \}$ is called asymptotically
periodic if it converges to a periodic orbit when
$n\longrightarrow\infty$. In other words, there exists a periodic
orbit $\{y_1, y_2, \ldots, y_k, y_1,$ $y_2, \ldots, y_k, \ldots\}$
such that $\displaystyle{\lim_{n\rightarrow\infty}} |x_n - y_n |=0$.
\end{definition}

The following result is a variant of \cite[Theorem
3.4]{Alligood:1996} based on Lipschitz maps.

\begin{theorem}\label{L4}
Let $f: {\mathbb R}\longrightarrow {\mathbb R}$ be a Lipschitz map
with first derivative continuous in the set $X$. Assume that the
orbit ${\mathcal O}_{x_1}=\{x_1, x_2, \ldots, x_n, \ldots \}\subset
X$ satisfies $f^{'}(x_i) \neq 0$ for all $ i = 1, 2, \ldots $. If
${\mathcal O}_{x_1}$ is asymptotically periodic to the periodic
orbit ${\mathcal O}_{y_1} = \{y_1, y_2, \ldots, y_k, y_1, y_2,
\ldots , y_k, \ldots  \}$, then $h(x_1)= h(y_1)$, if both Lyapunov
exponent exist.
\end{theorem}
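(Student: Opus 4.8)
\noindent The plan is to reduce the statement to a Cesàro–mean argument, exactly as in the classical smooth case \cite[Thm.~3.4]{Alligood:1996}, the only new point being that the derivative is available only on the full-measure set $X$ supplied by Rademacher's theorem (Theorem~\ref{rad1}). By Definition~\ref{L1} (equivalently Definition~\ref{L2}) we have
\[
h(x_1)=\lim_{n\to\infty}\frac1n\sum_{i=1}^{n}\ln|f'(x_i)|,\qquad h(y_1)=\lim_{n\to\infty}\frac1n\sum_{i=1}^{n}\ln|f'(y_i)|,
\]
and both limits exist by hypothesis. Hence it suffices to show that the difference of the partial averages tends to $0$, since two convergent sequences whose difference tends to $0$ have the same limit. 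By the Cesàro mean theorem this follows once we prove that the term-by-term difference $a_n:=\ln|f'(x_n)|-\ln|f'(y_n)|$ satisfies $a_n\to 0$ as $n\to\infty$.

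For the necessary bounds I would argue as follows. Since $h(y_1)$ exists as a real number, each of the finitely many periodic points $y_1,\dots,y_k$ must lie in $X$ and satisfy $f'(y_i)\neq 0$ (otherwise some summand $\ln|f'(y_i)|$ would be undefined or equal to $-\infty$); set $\delta:=\min_{1\le i\le k}|f'(y_i)|>0$. Because $f$ is Lipschitz with some constant $c$, at every point where the derivative exists one has $|f'|\le c$, so all the numbers $|f'(x_n)|,|f'(y_n)|$ lie in $(0,c]$. Since $f'$ restricted to $X$ is continuous and each $y_i\in X$, there is a single $\rho>0$ (the minimum of the finitely many radii coming from continuity at $y_1,\dots,y_k$) such that for every $i$ and every $z\in X$ with $|z-y_i|<\rho$ one has $|f'(z)-f'(y_i)|<\delta/2$.

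Now I would combine these ingredients. By Definition~\ref{L3}, $|x_n-y_n|\to 0$, so there is $N$ with $|x_n-y_n|<\rho$ for all $n\ge N$; since $y_n$ is one of $y_1,\dots,y_k$ and $x_n\in X$ (because $\mathcal{O}_{x_1}\subset X$), we get $|f'(x_n)-f'(y_n)|<\delta/2$, and in particular $|f'(x_n)|>\delta/2$, for all $n\ge N$. Hence for $n\ge N$ both $|f'(x_n)|$ and $|f'(y_n)|$ lie in the compact interval $[\delta/2,c]$, on which $\ln$ is uniformly continuous; together with $|f'(x_n)-f'(y_n)|\to 0$ this forces $a_n\to 0$. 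The Cesàro step of the first paragraph then yields $\frac1n\sum_{i=1}^{n}a_i\to 0$, and therefore $h(x_1)=h(y_1)$, as claimed.

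The only genuinely delicate point, and the one I would write out with care, is the interaction between the convergence $x_n\to y_n$ and the continuity of $f'$, which here is assumed only on the subset $X$ rather than on all of $\mathbb{R}$: one must exploit that $x_n\in X$ (so that $f'(x_n)$ is defined and the continuity estimate at $y_n\in X$ is applicable) and that the periodic orbit visits only finitely many points, so that a single radius $\rho$ and a single compact interval $[\delta/2,c]$ serve uniformly for all large $n$. Everything else — the Lipschitz bound $|f'|\le c$, uniform continuity of $\ln$ on a compact interval bounded away from $0$, and the passage from a null sequence to its Cesàro averages — is routine.
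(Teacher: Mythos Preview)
Your proof is correct. It differs from the paper's argument in one structural respect: the paper first treats the case $k=1$ (where the Ces\`aro argument is immediate since $y_n\equiv y_1$) and then, for $k>1$, reduces to the fixed-point case by passing to the iterate $g=f^k$, invoking the auxiliary fact that the Lyapunov number of $\mathcal{O}_{x_1}$ under $f^k$ is $L^k$. You instead handle all periods at once by working directly with the sequence $a_n=\ln|f'(x_n)|-\ln|f'(y_n)|$ and exploiting that the periodic orbit visits only finitely many points, so a single radius $\rho$ and a single compact interval $[\delta/2,c]$ serve uniformly. This buys you a shorter and more self-contained proof (no need for the $L\mapsto L^k$ lemma), and you are also more explicit than the paper about why $y_i\in X$ with $f'(y_i)\neq 0$, why $|f'|\le c$ from the Lipschitz constant, and how continuity of $f'$ \emph{on} $X$ (rather than on all of $\mathbb{R}$) suffices. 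The one place you could tighten the writeup: you establish $|f'(x_n)-f'(y_n)|<\delta/2$ for $n\ge N$ and then assert $|f'(x_n)-f'(y_n)|\to 0$; this is of course the same continuity argument with $\delta/2$ replaced by an arbitrary $\varepsilon$, but saying so in one line would make the logic airtight.
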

\begin{proof}
Since $f$ is a Lipschitz map, applying Rademacher's theorem with
$n=m=1$, it follows that $f$ has derivative in the set $X$. As
${\mathcal O}_{x_1}\subset X$, we can guarantee the derivative of
all point of ${\mathcal O}_{x_1}$. Although from here the proof is
similar to the proof of \cite[Theorem 3.4]{Alligood:1996}, we even
present it here for completeness.

Assume that $k=1$; then $\displaystyle{\lim_{n\rightarrow\infty}}
x_n = y_1$. Since the derivative is continuous, it follows that
$\displaystyle{\lim_{n\rightarrow\infty}} f^{ '}(x_n)=f^{ '}(y_1)$.
Moreover, one has $\displaystyle{\lim_{n\rightarrow\infty}}\ln
|f^{'}(x_n)| = \ln |f^{'}(y_1)|$. Therefore, $h(x_1) =
\displaystyle{\lim_{n\rightarrow\infty}} 1/n
\displaystyle{\sum_{i=1}^{n}}\ln |f^{'}(x_i)|=\ln
|f^{'}(y_1)|=h(x_1)$. If $k > 1$, we know that $y_1$ is a fixed
point of $f^{k}$ and ${\mathcal O}_{x_1}$ is asymptotically periodic
under $f^{k}$ to ${\mathcal O}_{y_1}$. Applying the reasoning above
to $x_1$ and $f^{k}$ it follows that $h(x_1)=\ln
|{(f^{k})}^{'}(y_1)|$. It is known that if $L$ is the Lyapunov
number of ${\mathcal O}_{x_1}$ under the map $f$, then the Lyapunov
number of ${\mathcal O}_{x_1}$ under the map $f^{k}$ is $L^{k}$ (
see \cite[Ex T3.1]{Alligood:1996}). Then the Lyapunov exponent
$h(x_1)$ of $x_1$ under $f$ equals $h(x_1) = 1/k \ln
|(f^{k})^{'}(y_1)|= h(y_1)$. The proof is complete.
\end{proof}

A version of Theorem~\ref{L4} to locally Lipschitz maps is given
below.

\begin{theorem}\label{L5}
Let $f: {\mathbb R}\longrightarrow {\mathbb R}$ be a locally
Lipschitz map on the open interval $(a, b)$ with first derivative
continuous in $(a, b)$. Assume that the orbit ${\mathcal
O}_{x_1}\subset (a, b)\cap X$ satisfies $f^{'}(x_i) \neq 0$ for all
$ i = 1, 2, \ldots $. If ${\mathcal O}_{x_1}$ is asymptotically
periodic to the periodic orbit ${\mathcal O}_{y_1} = \{y_1, y_2,
\ldots, y_k, y_1, y_2, \ldots , y_k, \ldots  \}$, then $h(x_1)=
h(y_1)$ if both Lyapunov exponent exist.
\end{theorem}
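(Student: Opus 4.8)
The plan is to follow the proof of Theorem~\ref{L4} almost verbatim, with the only structural change being that the global Rademacher theorem is replaced by its open-set variant. First I would apply Theorem~\ref{rad2} with $\Omega=(a,b)$ and $m=n=1$ to conclude that $f$ is differentiable at Lebesgue-almost every point of $(a,b)$; since by hypothesis ${\mathcal O}_{x_1}\subset (a,b)\cap X$, the derivative $f'(x_i)$ exists for every $i$, it is nonzero by assumption, and $f'$ is continuous along the orbit. Hence each term $\ln|f'(x_i)|$ is well defined and the Lyapunov exponent $h(x_1)=\lim_{n\to\infty}\frac1n\sum_{i=1}^{n}\ln|f'(x_i)|$ makes sense.

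Next I would treat the case $k=1$. Here asymptotic periodicity means $\lim_{n\to\infty}x_n=y_1$. Continuity of $f'$ on $(a,b)$ then gives $f'(x_n)\to f'(y_1)$, and since $f'(x_i)\neq 0$ this yields $\ln|f'(x_n)|\to\ln|f'(y_1)|$. A Ces\`aro-mean argument (the average of a convergent sequence converges to the same limit) gives $h(x_1)=\lim_{n\to\infty}\frac1n\sum_{i=1}^{n}\ln|f'(x_i)|=\ln|f'(y_1)|=h(y_1)$, exactly as in Theorem~\ref{L4}.

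For $k>1$, put $g=f^{k}$. Then $g$ is locally Lipschitz on a suitable open interval (a finite composition of locally Lipschitz maps is locally Lipschitz), $y_1$ is a fixed point of $g$, and the orbit of $x_1$ under $g$, namely $\{x_1,x_{k+1},x_{2k+1},\ldots\}$, is asymptotically periodic to $\{y_1\}$. By the chain rule, $g'=\prod_{i=1}^{k}f'(\,\cdot\,)$ wherever the relevant iterates lie in $(a,b)\cap X$, so $g'$ exists and is continuous along this sub-orbit and $g'(x_{jk+1})\to (f^{k})'(y_1)=\prod_{i=1}^{k}f'(y_i)$ as $j\to\infty$. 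Applying the $k=1$ case to $g$ gives that the Lyapunov exponent of $x_1$ under $g$ equals $\ln|(f^{k})'(y_1)|$. Finally, invoking the relation between the Lyapunov number of an orbit under $f$ and under $f^{k}$ (it is raised to the $k$-th power, so the exponent is multiplied by $k$; see \cite[Ex T3.1]{Alligood:1996}), one obtains $h(x_1)=\frac1k\ln|(f^{k})'(y_1)|=h(y_1)$.

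The only nontrivial point beyond this bookkeeping is to make sure that the limiting periodic orbit $\{y_1,\ldots,y_k\}$ actually lies inside the \emph{open} interval $(a,b)$, not merely in its closure, so that continuity of $f'$ on $(a,b)$ may be used at the points $y_i$ and the chain-rule product $(f^{k})'(y_1)$ is legitimate. This should be added to the hypotheses or deduced from the fact that the statement already presupposes the existence of $h(y_1)$; everything else is a transcription of the proof of Theorem~\ref{L4} with ${\mathbb R}$ replaced by $(a,b)$ and Rademacher's theorem replaced by Theorem~\ref{rad2}.
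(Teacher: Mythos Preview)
Your proposal is correct and matches the paper's approach exactly: the paper does not write out a separate proof for Theorem~\ref{L5}, leaving it implicit that the argument of Theorem~\ref{L4} carries over verbatim with Rademacher's theorem replaced by its open-set variant (Theorem~\ref{rad2}) and ${\mathbb R}$ replaced by $(a,b)$. Your additional remark about needing the periodic orbit $\{y_1,\ldots,y_k\}$ to lie in the open interval $(a,b)$ is a valid observation that the paper itself glosses over.
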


Let $f: {\mathbb R}\longrightarrow {\mathbb R}$ be a map, and let
${\mathcal O}_{x_1}$ be a bounded orbit of $f$. Recall that the
orbit is \emph{chaotic} if ${\mathcal O}_{x_1}$ is not
asymptotically periodic and if the Lyapunov exponent $h(x_1)$ is
greater than zero. In terms of Lipschitz maps one has the following
new definition for chaotic orbits:

\begin{definition}
Let $f: {\mathbb R}\longrightarrow {\mathbb R}$ be a Lipschitz map
with first derivative continuous at $X$, and let ${\mathcal
O}_{x_1}$ be a bounded orbit of
$f$. We call the orbit chaotic if\\
1. ${\mathcal O}_{x_1}$ is not asymptotically periodic;\\
2. the Lyapunov exponent $h(x_1)$ is greater than zero.
\end{definition}

\begin{remark}
It is interesting to note that, since Lipschitz maps are
differentiable almost everywhere (with respect to the Lebesgue
measure) according to Theorems~\ref{rad1}~and~\ref{rad2}, then the
numerical simulations are performed similarly to the standard case,
i.e., the cases where the map is differentiable. Because of this
fact, we do not present numerical simulations provement in this
paper.
\end{remark}

To finish this section, we give three examples. The first one shows
a map which is locally Lipschitz but it is not differentiable. The
second presents a family of Lipschitz maps which are not
differentiable; the third example consists in a family of logistic
maps which are both Lipschitz and differentiable.

\begin{example}\label{exa1}
Let us consider the map $f: {\mathbb R}\longrightarrow {\mathbb R}$
given by
\begin{eqnarray*}
x_{n+1} = f(x_{n})= \left \{
 \begin{array}{cc}
2x_{n}, & x_{n}<0\\
 x_{n}^{2}, & 0 \leq x_{n}<1 \\
0.5 x_{n} + 0.5, & x_{n}\geq 1\\
\end{array}
\right. ,
\end{eqnarray*}
Figure~\ref{fig1} shows the graphic of the map $f$. Note that $f$ is
not differentiable at $p=0$ and $p=1$, but it is locally Lipschitz
in the open intervals $(-\infty , 0)$, $(0 , 1)$
$(1, + \infty)$. Thus, our method can be applied.\\
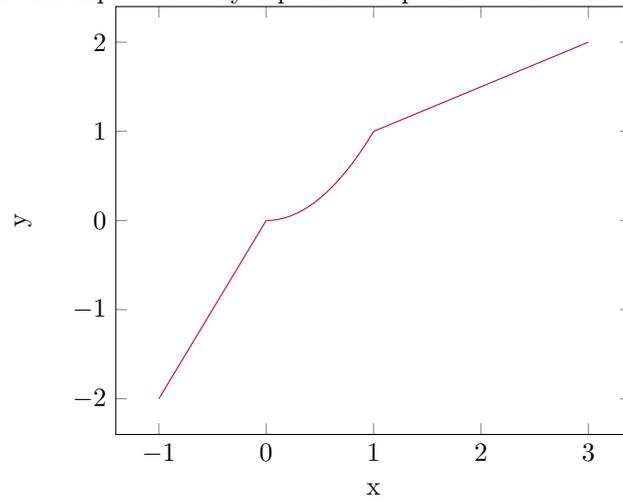
\begin{figure}
\begin{center}
\caption{Example of locally Lipschitz map which is not
differentiable\label{fig1}}
\begin{tikzpicture}
\begin{axis}[xlabel=x,ylabel=y]
\addplot[red!70!blue, samples=1200, domain=0.0:-1.0]{2*x}; \addplot[
red!70!blue,
 samples=1200, domain=0.0:1.0]{x^2};
\addplot[red!70!blue, samples=1200, domain=1.0:3.0]{0.5*x +0.5};
\end{axis}
\end{tikzpicture}
\end{center}
\end{figure}
\end{example}

\begin{example}\label{exa2}
Here, let us consider the family of Lipschitz maps $g_{a,b}:
{\mathbb R}\longrightarrow {\mathbb R}$ given by
\begin{eqnarray*}
x_{n+1} = g_{a,b}(x_{n})= a|x_{n}|+ b,
\end{eqnarray*}
where $a, b$ are real numbers. For $a=-2$ and $b=1$, the graphic of
$g_{-2,1}$ is shown in Figure~\ref{fig2}. Note that the map
$g_{-2,1}$ is Lipschitz, so our method can be applied, but it is not
differentiable. These maps are a type of tent maps.

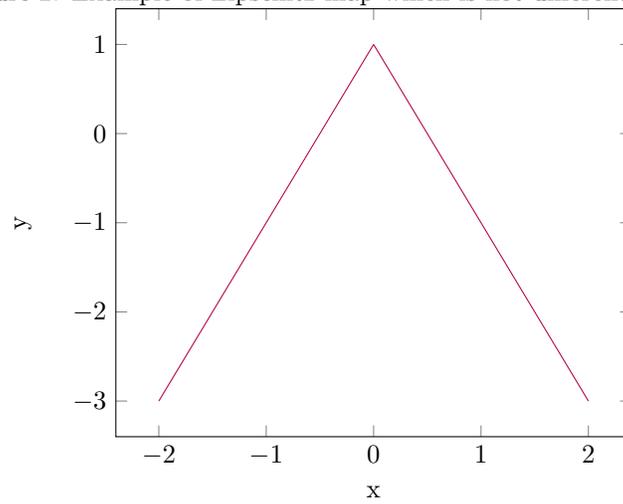
\begin{figure}
\begin{center}
\caption{Example of Lipschitz map which is not
differentiable\label{fig2}}
\begin{tikzpicture}
\begin{axis}[xlabel=x,ylabel=y]
\addplot[red!70!blue, samples=1200, domain=0.0:2.0]{-2*x+1};
\addplot[ red!70!blue,
 samples=1200, domain=-2.0:0.0]{2*x + 1};
\end{axis}
\end{tikzpicture}
\end{center}
\end{figure}
\end{example}

\begin{example}\label{exa3}
Let $f: [0, 1]\longrightarrow [0, 1]$ be a map given by
\begin{eqnarray*}
x_{n+1} = f(x_{n})= a x_{n}(1 - x_{n}),
\end{eqnarray*} where $0 < a < 4$ is a real number. These
(family of) maps are well known in the literature as logistic maps.
It is easy to see that $f$ is a Lipschitz (and also differentiable)
map. Figure~\ref{fig3} displays the graphic of the logistic map for
$a=3$.

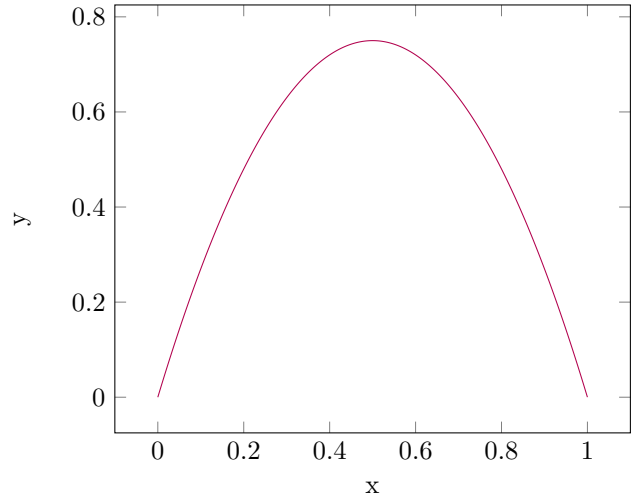
\begin{figure}
\begin{center}
\caption{Example of a Lipschitz map which is also
differentiable\label{fig3}}
\begin{tikzpicture}
\begin{axis}[xlabel=x,ylabel=y]
\addplot[red!70!blue, samples=1200, domain=0.0:1.0]{3*x*(1 - x)};
\end{axis}
\end{tikzpicture}
\end{center}
\end{figure}

\end{example}

\section{Final Remarks}\label{sec4}

We have proposed a definition of Lyapunov exponent for Lipschitz
maps, which are not necessarily differentiable. Furthermore, we have
shown that the results which are valid to discrete standard
dynamical systems are also true in this new context. This novel
approach expands the theory of dynamical systems and it is simple to
be applied. As a future work, it will be interesting to investigate
the possibility of defining Lyapunov exponents for other class of
maps, such as Holder continuous maps.

\begin{center}
\textbf{Acknowledgements}
\end{center}

This research has been partially supported by the Brazilian Agencies
CAPES and CNPq. We would like to thank the anonymous referees for
their valuable suggestions and comments that helped to improve
significantly the quality and the readability of this paper, and
Prof. Antonio Marcos Batista for helpful discussions. We also would
like to thank the Associate Editor Stefano Lenci and the
Editor-in-Chief Walter Lacarbonara for their excellent works on the
review process. The authors declare that there is not conflict of
interests in the publication of this paper.

\small

\end{document}